\newcommand{\bB}{{\mathbb{B}}}
\newcommand{\bM}{{\mathbb{M}}}
\newcommand{\bN}{{\mathbb{N}}}
\newcommand{\bR}{{\mathbb{R}}}
  \newcommand{\E}{{\mathcal{E}}}
  \newcommand{\M}{{\mathcal{M}}}
  \newcommand{\R}{{\mathcal{R}}}
\renewcommand{\phi}{\varphi}
\newcommand{\upchi}{{\raise.35ex\hbox{\ensuremath{\chi}}}}
\renewcommand{\leq}{\leqslant}
\renewcommand{\geq}{\geqslant}
\renewcommand{\le}{\leqslant}
\newtheorem{thm}{Theorem}[section]
\newtheorem{cor}[thm]{Corollary}
\newtheorem{lemma}[thm]{Lemma}
\newtheorem{rk}[thm]{Remark}
\begin{document}

\title{H\"older estimates for the noncommutative Mazur maps}
\date{}
\author[\'E. Ricard]{\'Eric Ricard}
\address{Laboratoire de Math{\'e}matiques Nicolas Oresme,
Universit{\'e} de Caen Basse-Normandie,
14032 Caen Cedex, France}
\email{eric.ricard@unicaen.fr}

\thanks{{\it 2010 Mathematics Subject Classification:} 46L51; 47A30.} 
\thanks{{\it Key words:} Noncommutative $L_p$-spaces, Mazur maps}

\maketitle

\begin{abstract}
For any von Neumann algebra $\M$, the noncommutative Mazur map
$M_{p,q}$ from $L_p(\M)$ to $L_q(\M)$ with $1\leq p,q<\infty$ is
defined by $f\mapsto f|f|^{\frac {p-q}q}$. In analogy with the
commutative case, we gather estimates showing that $M_{p,q}$ is
$\min\{\frac pq,1\}$-H\"older on balls.
\end{abstract}

\section{Introduction}

In the integration theory, the Mazur map $M_{p,q}$ from $L_p(\Omega)$ to
$L_q(\Omega)$ is defined by $f\mapsto f|f|^{\frac {p-q}q}$. It is an
easy exercise to check that it is $\min\{\frac
pq,1\}$-H\"older. Theses maps also make sense in the noncommutative
$L_p$-setting for which one should expect a similar behavior.  We refer to \cite{PX} for the definitions of $L_p$-spaces for
semifinite von Neumann algebras or more general ones.  Having a
quantitative result on Mazur maps may be useful when dealing with the
structure of noncommutative $L_p$-spaces (see also \cite{Ray}). By
the way, these maps are used implicitly in the definition of $L_p$.
It is known that $M_{p,q}$ is locally uniformly continuous in full
generality (Lemma 3.2 in \cite{Ray}). The lack of references for
quantitative estimates motivates this note.  When dealing with the
Schatten classes (when $\M=B(\ell_2)$), some can be found in
\cite{AP}, more precisely $M_{p,q}$ is $\frac pq$-H\"older when
$1<p<q$. The techniques developed there can be adapted to semifinite
von Neumann algebras but can't reach the case $p=1$. An estimate when 
$q=p'$ and $1<p<\infty$ can also be found in \cite{CL}. Here we aim to
give to the best possible estimates especially for $p=1$.

\medskip
\noindent \textbf{Theorem} \textit{Let $\M$ be a von Neumann algebra,
  for $1\leq p,q<\infty$, $M_{p,q}$ is $\min\{\frac pq,1\}$-H\"older
  on the unit ball of $L_p(\M)$.}

\medskip

The proofs provide a strange behaviour of the H\"older constants
$c_{p,q}$ as $c_{p,q}\to \infty$ if $p<q\to 1$. This reflects the fact that the absolute value is not Lipschitz on $L_1$ or $L_\infty$ but the result may hold with an absolute constant.

We follow a basic approach, showing first the results for semifinite
von Neumann algebras in section 2. We start by looking at positive
elements and then use some commutator or anticommutator estimates.
The ideas here are inspired by \cite{Bha, Dav}. In section 3, we
explain briefly how the Haagerup reduction technique from \cite{HJX}
can be used to get the theorem in full generality.

\section{Semifinite case}

In this section $\M$ is assumed to be semifinite with a nsf trace
$\tau$. We refer to \cite{PX} for definitions. We denote by $L_0(\M,\tau)$ the set of $\tau$-measurable operators, and 
$$L_p(\M,\tau)= \Big\{ f\in L_0(\M,\tau)\;|\; \|f\|_p^p = \tau \big(|f|^p\big)<\infty\Big\}.$$
We drop the reference to $\tau$ in this section.

First we focus on the Mazur maps for positive elements using some
basic inequalities. The first one can be found in \cite{CPPR} Lemma
1.2. An alternative proof can be obtained by adapting the arguments of
\cite{Bha} Theorem X.1.1 to semifinite von Neumann algebras.

\begin{lemma}\label{hardps} 
If $p\geq 1$, $0 < \theta \le 1$, for any $x,\,y\in L_{\theta p}^+(\M)$, we have $$\big\|x^\theta-y^\theta\big\|_{p}\leq \big\|x-y\big\|_{\theta p}^\theta.$$
\end{lemma}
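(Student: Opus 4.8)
The plan is to deduce everything from two classical facts about $\phi(t)=t^\theta$ on $[0,\infty)$ when $0<\theta\le 1$: it is operator monotone, and it is subadditive in the sense that $(a+b)^\theta\le a^\theta+b^\theta$ for all positive $a,b$ (see e.g.\ \cite{Bha}). Before using them I would reduce to bounded elements: replacing $x$ by $x_n=x\chi_{[0,n]}(x)$ and $y$ by $y_n=y\chi_{[0,n]}(y)$, one has $x_n\to x$ in $L_{\theta p}$, $x_n^\theta=x^\theta\chi_{[0,n]}(x)\to x^\theta$ in $L_p$, and similarly for $y$; since $\|x_n-y_n\|_{\theta p}\to\|x-y\|_{\theta p}$ and $\|x_n^\theta-y_n^\theta\|_p\to\|x^\theta-y^\theta\|_p$, it suffices to prove the estimate for bounded $x,y\ge 0$.

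So assume $x,y\in\M$, $x,y\ge 0$. I would write the Jordan decomposition $x-y=h_+-h_-$ with $h_\pm\ge 0$, $h_+h_-=0$, and let $e_\pm$ be the support projections of $h_\pm$, so that $e_+\perp e_-$ and $h_\pm=e_\pm h_\pm e_\pm$. Put $t:=x+h_-=y+h_+\ge 0$, so that $t\ge x$ and $t\ge y$, and set $A:=t^\theta-x^\theta$, $B:=t^\theta-y^\theta$. Operator monotonicity gives $A,B\ge 0$, while subadditivity applied to $t=x+h_-$ and to $t=y+h_+$ gives $A\le h_-^\theta$ and $B\le h_+^\theta$. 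The key step is the observation that $0\le A\le h_-^\theta=e_-h_-^\theta e_-$ forces $(1-e_-)A(1-e_-)=0$, hence (as $A\ge0$) $A=e_-Ae_-$, and likewise $B=e_+Be_+$. Since $x^\theta-y^\theta=B-A$, this exhibits $x^\theta-y^\theta$ as a difference of two positive operators living on the mutually orthogonal corners $e_+\M e_+$ and $e_-\M e_-$; therefore $|x^\theta-y^\theta|=A+B$ and $|x^\theta-y^\theta|^p=A^p+B^p$.

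It then only remains to take traces: using $0\le A\le h_-^\theta$, $0\le B\le h_+^\theta$ together with monotonicity of $\|\cdot\|_p$ on positives and the orthogonality $h_+\perp h_-$,
\[
\|x^\theta-y^\theta\|_p^p=\tau(A^p)+\tau(B^p)\le\tau\big(h_-^{\theta p}\big)+\tau\big(h_+^{\theta p}\big)=\tau\big(|x-y|^{\theta p}\big)=\|x-y\|_{\theta p}^{\theta p},
\]
and extracting $p$-th roots finishes the proof, uniformly in $p\ge1$. The one point I expect to require genuine care is precisely the passage to this orthogonal-corner picture: the naive operator inequality $|x^\theta-y^\theta|\le|x-y|^\theta$ is false in general, and the trick that makes the argument work is to compare both $x$ and $y$ with the single majorant $t=x+h_-=y+h_+$ and to notice that the two one-sided estimates $t^\theta-x^\theta\le h_-^\theta$ and $t^\theta-y^\theta\le h_+^\theta$ are supported on orthogonal projections. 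The truncation reduction and the two invoked inequalities for $t\mapsto t^\theta$ are routine.
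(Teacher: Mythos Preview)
There is a genuine gap. The step ``subadditivity applied to $t=x+h_-$ gives $A\le h_-^\theta$'' asserts the \emph{operator} inequality $(x+h_-)^\theta\le x^\theta+h_-^\theta$, and this is false for non-commuting positives even though the scalar version holds. For instance with $\theta=\tfrac12$, $a=\bigl(\begin{smallmatrix}1&0\\0&0\end{smallmatrix}\bigr)$ and $b=\bigl(\begin{smallmatrix}1&1\\1&1\end{smallmatrix}\bigr)$ one finds $(a+b)^{1/2}=\tfrac{1}{\sqrt5}\bigl(\begin{smallmatrix}3&1\\1&2\end{smallmatrix}\bigr)$ while $a^{1/2}+b^{1/2}=\bigl(\begin{smallmatrix}1+1/\sqrt2&1/\sqrt2\\1/\sqrt2&1/\sqrt2\end{smallmatrix}\bigr)$, and the $(2,2)$-entry of $a^{1/2}+b^{1/2}-(a+b)^{1/2}$ equals $\tfrac1{\sqrt2}-\tfrac2{\sqrt5}<0$. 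Once $A\le h_-^\theta$ fails, so does the conclusion that $A=t^\theta-x^\theta$ is supported on $e_-$ (indeed $t$ and $x$ agree on $\ran(1-e_-)$ but $t^\theta$ and $x^\theta$ need not), and the orthogonal-corner identity $|x^\theta-y^\theta|=A+B$ on which the rest of the argument hinges collapses.

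The paper does not write out a proof but points to \cite{CPPR} and to \cite[Theorem~X.1.1]{Bha}, indicating that the argument goes through operator monotonicity together with the integral representation $s^\theta=c_\theta\int_{0}^{\infty}\frac{t^\theta s}{s+t}\,\frac{dt}{t}$. What that machinery actually yields is the \emph{ordered} case: if $0\le x\le t$ then $\|t^\theta-x^\theta\|_p\le\|(t-x)^\theta\|_p$, a norm (not operator) inequality. From there one does pass to general $x,y$ via the common majorant $t=x+h_-=y+h_+$, much as you set up, but using only these norm bounds together with a more careful majorization/rank argument rather than any claim that $A$ and $B$ live on orthogonal corners. Your truncation reduction and the introduction of $t$ are fine; it is specifically the appeal to operator subadditivity that needs to be replaced.
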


 Its proof relies on the fact that $s\mapsto s^\theta$ is operator monotone and has an integral representation 
$$s^\theta = c_\theta \int_{\bR_+} \frac{t^\theta s}{s+t} \, \frac{dt}{t} \qquad \mbox{with} \qquad c_\theta = \Big( \int_{\bR_+} \frac{u^\theta}{u(1+u)} \, du \Big)^{-1}.$$
\begin{lemma}\label{hardps2} 
If $p\geq 1$, $0 < \theta \le 1$, for any
$x,\,y\in L_{(1+\theta)p}^+(\M)$, we have : 
$$\big\|x^{1+\theta}-y^{1+\theta}\big\|_{p}\leq 3\big\|x-y\big\|_{(1+\theta) p}
 \max\Big\{\big\|x \big\|_{(1+\theta) p},\,\big\|y \big\|_{(1+\theta) p}\Big\}^\theta .$$
\end{lemma}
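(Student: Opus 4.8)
The plan is to bootstrap Lemma~\ref{hardps} through a telescoping identity and Hölder's inequality, the essential point being a ``compensated'' Lipschitz bound for the map $x\mapsto x^{\theta}$. Put $r=(1+\theta)p$. Since powers of a single positive operator commute, $x^{1+\theta}=x^{\theta}x$, and likewise for $y$, so
$$x^{1+\theta}-y^{1+\theta}=y^{\theta}(x-y)+(x^{\theta}-y^{\theta})\,x .$$
For the first term, Hölder with $\tfrac{\theta}{r}+\tfrac1r=\tfrac1p$ gives $\|y^{\theta}(x-y)\|_{p}\le\|y\|_{r}^{\theta}\,\|x-y\|_{r}$. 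For the second, factor $(x^{\theta}-y^{\theta})x=\bigl((x^{\theta}-y^{\theta})x^{1-\theta}\bigr)x^{\theta}$ and apply Hölder with $\tfrac1r+\tfrac{\theta}{r}=\tfrac1p$ to get $\|(x^{\theta}-y^{\theta})x\|_{p}\le\|(x^{\theta}-y^{\theta})x^{1-\theta}\|_{r}\,\|x\|_{r}^{\theta}$. Hence it suffices to prove
$$\bigl\|(x^{\theta}-y^{\theta})\,x^{1-\theta}\bigr\|_{r}\le 2\,\|x-y\|_{r}\qquad(x,y\in L_{r}^{+}(\M)),$$
since then $\|x^{1+\theta}-y^{1+\theta}\|_{p}\le 3\,\|x-y\|_{r}\max\{\|x\|_{r},\|y\|_{r}\}^{\theta}$. (The cruder bound $\|(x^{\theta}-y^{\theta})x\|_{p}\le\|x^{\theta}-y^{\theta}\|_{r/\theta}\|x\|_{r}\le\|x-y\|_{r}^{\theta}\|x\|_{r}$ straight from Lemma~\ref{hardps} only settles the lemma when $\|x-y\|_{r}\ge\max\{\|x\|_{r},\|y\|_{r}\}$; the real difficulty is to replace $\|x-y\|_{r}^{\theta}$ by $\|x-y\|_{r}$ when $x$ and $y$ are close, which is exactly what the compensated bound does. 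Its commutative model is the scalar inequality $a^{\theta}b^{1-\theta}\le\theta a+(1-\theta)b$, i.e. $|a-a^{1-\theta}b^{\theta}|\le|a-b|$, so one should expect such a bound with an absolute constant.)

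To prove the compensated bound I would insert the integral representation of $s\mapsto s^{\theta}$ quoted after Lemma~\ref{hardps}; the resolvent identity turns it into $x^{\theta}-y^{\theta}=c_{\theta}\int_{0}^{\infty}t^{\theta}\,\tfrac1{x+t}(x-y)\tfrac1{y+t}\,dt$, and, $x^{1-\theta}$ commuting with $\tfrac1{x+t}$,
$$x^{1-\theta}(x^{\theta}-y^{\theta})=c_{\theta}\int_{0}^{\infty}t^{\theta}\,\frac{x^{1-\theta}}{x+t}\,(x-y)\,\frac1{y+t}\,dt ,$$
with $\bigl\|\tfrac{x^{1-\theta}}{x+t}\bigr\|_{\infty}=\theta^{\theta}(1-\theta)^{1-\theta}t^{-\theta}$ and $\bigl\|\tfrac1{y+t}\bigr\|_{\infty}\le t^{-1}$. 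The obstacle is that this integral is only conditionally convergent in $L_{r}$: estimating naively under the integral sign produces a divergent $\int_{0}^{\infty}dt/t$ (the whole picture is scale invariant under $(x,y,t)\mapsto(\lambda x,\lambda y,\lambda t)$), so the bound has to come from cancellation between the ranges $t\lesssim\|x-y\|_{r}$ and $t\gtrsim\|x-y\|_{r}$. This is where the commutator/anticommutator manipulations of \cite{Bha,Dav} come in: one rewrites the integrand — moving part of $x^{1-\theta}$, or, working from the symmetrised identity $x^{1+\theta}-y^{1+\theta}=\tfrac14\{x^{\theta}+y^{\theta},x-y\}+\tfrac14\{x^{\theta}-y^{\theta},x+y\}$, moving part of $x+y$ — across the resolvents via the resolvent expansion, so that each resulting piece is a genuine product of factors in complementary $L_{s}$-spaces and Hölder applies with integrable exponents; equivalently, one may view $(x^{\theta}-y^{\theta})x^{1-\theta}$ as a double operator integral with symbol $\tfrac{\lambda^{\theta}-\mu^{\theta}}{\lambda-\mu}\mu^{1-\theta}$ and invoke its boundedness on $L_{r}$ for $1<r<\infty$ — which is the entire relevant range, since here $r=(1+\theta)p>1$. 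Making this cancellation quantitative with an absolute constant is the technical core of the argument, and I expect it, rather than any of the bookkeeping above, to be the main hurdle.
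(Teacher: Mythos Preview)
Your reduction to the ``compensated'' bound
\[
\bigl\|(x^{\theta}-y^{\theta})\,x^{1-\theta}\bigr\|_{r}\;\le\;2\,\|x-y\|_{r}
\]
is correct, but the proposal stops precisely where the actual work begins: you identify the divergence of the naive resolvent estimate, wave at commutator manipulations and double operator integrals, and then concede that ``making this cancellation quantitative with an absolute constant \ldots\ is the main hurdle''. That hurdle is the lemma. The Schur multiplier you write down, $\frac{\lambda^{\theta}-\mu^{\theta}}{\lambda-\mu}\,\mu^{1-\theta}$, is indeed bounded on $L_r$ for $1<r<\infty$ by Potapov--Sukochev type results, but those give constants depending on $r$; since $r=(1+\theta)p$ can be arbitrarily close to $1$ (take $p=1$, $\theta\to 0$), this route will not deliver the absolute constant $2$ you need, nor any uniform constant.

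The paper sidesteps your compensated bound entirely. It uses the integral representation of $s\mapsto s^{1+\theta}$ (not $s^{\theta}$), writes $x^{1+\theta}-y^{1+\theta}$ as $\int_0^1 D_{y+u\delta}F(\delta)\,du$ with $F(s)=s^{1+\theta}$, and splits the derivative into two anticommutator terms $(y+u\delta)^{\theta}\delta+\delta(y+u\delta)^{\theta}$ plus a remainder $c_\theta\int t^{\theta}g_t\,\delta\,g_t\,\tfrac{dt}{t}$ with $g_t=(y+u\delta)(y+u\delta+t)^{-1}$. The first two are handled by H\"older along the convex path $(1-u)y+ux$. The remainder is the point: one writes $g_t=v_t\gamma$ with $\gamma^2=c_\theta\int t^{\theta}g_t^2\,\tfrac{dt}{t}\le(y+u\delta)^{\theta}$ and observes that $z\mapsto c_\theta\int t^{\theta}v_t z v_t\,\tfrac{dt}{t}$ is unital completely positive and trace preserving, hence an $L_p$-contraction for every $p$. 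Applying it to $\gamma\delta\gamma$ and H\"older gives the third unit of the constant~$3$, uniformly in $\theta$ and $p$. This ucp averaging trick is the idea your proposal is missing; none of the mechanisms you list produces it.
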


\begin{proof}
By standard arguments, cutting $x$ and $y$ by some of their spectral
projections, we may assume that $\tau$ is finite $x$ and $y$ are
bounded and invertible to avoid differentiability issues. We use
$$s^{1+\theta} = c_\theta \int_{\bR_+} \frac{t^\theta s^2}{s+t} \,
\frac{dt}{t}.$$ On bounded and invertible elements the maps $f_t:s\mapsto
\frac{s^2}{s+t}= s (s+t)^{-1}s$ are differentiable and 
$$D_sf_t(\delta)= \delta (s+t)^{-1}s+s(s+t)^{-1}\delta-s(s+t)^{-1}\delta (s+t)^{-1}s.$$
Hence putting $\delta=x-y$, we get the integral representation
 $$x^{1+\theta}-y^{1+\theta}= c_\theta \int_0^1 \int_{\bR_+} t^\theta 
D_{y+u\delta} f_t(\delta) \,\frac{dt}{t} {du}.$$
We get, letting $g_t(s)=s(s+t)^{-1}$
 $$x^{1+\theta}-y^{1+\theta}=\int_0^1 \Big((y+u\delta)^\theta \delta +\delta 
(y+u\delta)^\theta\Big) \,du-
c_\theta \int_0^1 \int_{\bR_+} t^\theta g_t(y+u\delta) \delta g_t(y+u\delta)\,\frac{dt}{t} du.$$
The first term is easily handled by the H\"older inequality.
 When $u$ is fixed, note that $g_t(y+u\delta)$ is an invertible positive contraction. Put 
$$\gamma^2=c_\theta\int_{\bR_+} t^\theta g_t(y+u\delta)^2  \frac{dt}{t}\leq (y+u\delta+t)^\theta,$$
and write $g_t(y+u\delta)=v_t\gamma$ so that 
$v_t$ and $y+u\delta$ commute and 
$$c_\theta\int_{\bR_+} t^\theta v_t^2 \frac{dt}{t}=1.$$ Therefore the
 map defined on $\M$, $x\mapsto c_\theta\int_{\bR_+} t^\theta
 v_t x v_t \frac{dt}{t}=1$ is unital completely
 positive and trace preserving, hence it extends to a contraction on
  $L_q$ when  $1\leq q\leq \infty$ (see \cite{HJX} for instance).
Applying it to $x= \gamma \delta \gamma$,
we deduce
$$ \Big\|c_\theta \int_{\bR_+} t^\theta g_t(y+u\delta) \delta g_t(y+u\delta)\,\frac{dt}{t}\Big\|_p \leq \big\| \gamma \delta \gamma\big\|_p\leq \big\|\delta\big
\|_{(1+\theta)p}. \big\|\gamma\big\|_{\frac{2(1+\theta)p}\theta}^2\leq  \big\|\delta\big
\|_{(1+\theta)p}.\big\|y+u\delta\big\|_{(1+\theta)p}^\theta.
$$
thanks to the H\"older inequality again, this is enough to get the conclusion.
\end{proof}

\begin{cor}\label{alpha}
Let $\alpha>1$, $p\geq 1$, for any
$x,\,y\in L_{\alpha p}^+(\M)$:
 $$\big\|x^{\alpha}-y^{\alpha}\big\|_{p}\leq 3\alpha \big\|x-y\big\|_{\alpha p}
 \max\Big\{\big\|x \big\|_{\alpha p},\,\big\|y \big\|_{\alpha p}\Big\}^{\alpha-1} .$$
\end{cor}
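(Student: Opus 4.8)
The plan is to induct on $\alpha$, using Lemma~\ref{hardps2} as the base case and an elementary telescoping identity to run the inductive step. For $1<\alpha\le 2$ the inequality is Lemma~\ref{hardps2} with $\theta=\alpha-1\in(0,1]$ (so that $(1+\theta)p=\alpha p$), together with $3\le 3\alpha$; so nothing is needed in that range. It then suffices to prove that the inequality for some $\alpha>1$ implies the inequality for $\alpha+1$: since the intervals $(1+n,2+n]$ with $n\ge 0$ cover $(1,\infty)$, every real $\alpha>1$ is obtained from some $\alpha_0\in(1,2]$ by adding a nonnegative integer.

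For the inductive step, fix $x,y\in L_{(\alpha+1)p}^+(\M)$, put $\delta=x-y$, and use the identity
\[
x^{\alpha+1}-y^{\alpha+1}=x\,(x^{\alpha}-y^{\alpha})+\delta\,y^{\alpha},
\]
valid in $L_0(\M)$ (the products on the right being $\tau$-measurable, and lying in $L_p$, by H\"older; alternatively one first reduces to bounded $x,y$ under a finite trace as in the proof of Lemma~\ref{hardps2}). Applying the noncommutative H\"older inequality to the two terms with the exponents $(\alpha+1)p$ and $p':=(\alpha+1)p/\alpha$, whose reciprocals add up to $1/p$, gives
\[
\big\|x^{\alpha+1}-y^{\alpha+1}\big\|_p\le \|x\|_{(\alpha+1)p}\,\big\|x^{\alpha}-y^{\alpha}\big\|_{p'}+\|\delta\|_{(\alpha+1)p}\,\|y\|_{(\alpha+1)p}^{\alpha}.
\]
The reason for this choice is that $p'\ge 1$ (because $p\ge 1$ and $(\alpha+1)/\alpha>1$) while $\alpha p'=(\alpha+1)p$, so the induction hypothesis applied at level $p'$ bounds $\|x^{\alpha}-y^{\alpha}\|_{p'}$ by $3\alpha\,\|\delta\|_{(\alpha+1)p}\,M^{\alpha-1}$ with $M:=\max\{\|x\|_{(\alpha+1)p},\|y\|_{(\alpha+1)p}\}$, and all norms of $x$ and $y$ stay at the single level $(\alpha+1)p$.

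Using $\|x\|_{(\alpha+1)p},\|y\|_{(\alpha+1)p}\le M$ in the last display then gives
\[
\big\|x^{\alpha+1}-y^{\alpha+1}\big\|_p\le(3\alpha+1)\,\|\delta\|_{(\alpha+1)p}\,M^{\alpha}\le 3(\alpha+1)\,\|\delta\|_{(\alpha+1)p}\,M^{\alpha},
\]
which is precisely the assertion for $\alpha+1$. The only genuine choice in the argument is the pair of H\"older exponents; the rest is bookkeeping. I expect the conceptual crux to be exactly the use of an \emph{additive} telescoping: a multiplicative recursion such as $x^{\alpha}=(x^{\alpha/2})^{2}$ iterated through Lemma~\ref{hardps2} would make the constant grow like a power of $\alpha$, whereas the identity above costs only an additive constant per unit increment, which is what yields the linear bound $3\alpha$.
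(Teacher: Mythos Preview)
Your proof is correct. The route differs from the paper's only in the order of the two ingredients: the paper first handles the integer case directly via the full telescoping $x^{n}-y^{n}=\sum_{k=0}^{n-1}x^{k}(x-y)y^{n-1-k}$ (giving constant $n$), and then writes $\alpha=n(1+\delta)$ with $n=[\alpha]$ and applies Lemma~\ref{hardps2} once to pass from $x,y$ to $x^{1+\delta},y^{1+\delta}$; you instead start from $(1,2]$ via Lemma~\ref{hardps2} and iterate the single-step identity $x^{\alpha+1}-y^{\alpha+1}=x(x^{\alpha}-y^{\alpha})+(x-y)y^{\alpha}$. Both combine Lemma~\ref{hardps2} with telescoping and H\"older, and both land on the constant $3\alpha$; your additive induction is slightly cleaner in that it avoids separating the integer case, while the paper's multiplicative decomposition makes the role of Lemma~\ref{hardps2} as a one-shot fractional correction more visible.
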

\begin{proof}
When $\alpha=n\in \bN$, the result is obvious with constant $n$. For
the general case, put $n=[\alpha]$, so that $\alpha=n(1+\delta)$ with
$0\leq \delta<1$, then use the result for $n$ and then Lemma
\ref{hardps2}.
\end{proof}

Coming back to the Mazur map $M_{p,q}$, Corollary \ref{alpha} says
that $M_{p,q}$ is Lipschitz on the positive unit ball of $L_p(M)$ if
$q<p$. On the other hand Lemma \ref{hardps} says that it is $\frac p
q$-H\"older if $q>p$.  To release the positivity assumption, we will need
a couple of Lemmas but we start by reducing the problem
 to selfadjoint elements by a well known $2\times2$-trick. .

If $x,\, y\in L_p(\M)$ are in the unit ball 
with polar decompositions 
$x=u|x|$ and $y=v|y|$, we want to prove that with $\theta= \min\{\frac pq ,1\}$
\begin{equation}\label{est} \Big\| u |x|^{\frac pq} - v |y|^{\frac pq}\Big\|_q \leq c_{p,q}
\Big\| x-y\Big\|_p^\theta\end{equation}
 In $\bM_2(\M)$ 
equipped with the tensor trace, let
 $$\tilde x=\begin{pmatrix}
 0&x\\
 x^*&0
 \end{pmatrix}\quad\textrm{and}\quad \tilde y=\begin{pmatrix}
 0&y\\
 y^*&0
 \end{pmatrix}\,.$$
They are selfadjoint with  polar decompositions
$$\tilde x=\tilde u |\tilde x|=\begin{pmatrix}
 0&u\\
 u^*&0
 \end{pmatrix}. \begin{pmatrix}
u|x|u^*& 0\\
 0 & |x|
 \end{pmatrix}\quad\textrm{and}\quad 
\tilde y=\tilde v |\tilde y|=\begin{pmatrix}
 0&v\\ v^*&0
 \end{pmatrix}. \begin{pmatrix}
v|y|v^*& 0\\
 0 & |y|
 \end{pmatrix}.$$
The estimates for $\tilde x$ and $\tilde y$ implies that for $x$ and $y$ as 
$$\tilde u |\tilde x|^{\frac pq}=\begin{pmatrix}
 0&u |x|^{\frac pq}\\
 |x|^{\frac pq}u^*&0
 \end{pmatrix}\quad\textrm{and}\quad \tilde v |\tilde y|^{\frac pq}=\begin{pmatrix}
 0&v |y|^{\frac pq}\\
 |y|^{\frac pq}v^*&0
 \end{pmatrix},$$ we have
$$\Big\| \tilde x-\tilde y\Big\|_p=2^{\frac 1p}\Big\| x-y\Big\|_p 
\qquad 
\Big\| \tilde u |\tilde x|^{\frac pq} - \tilde v |\tilde y|^{\frac pq}\Big\|_q
=2^{\frac 1q}\Big\| u |x|^{\frac pq} - v |y|^{\frac pq}\Big\|_q.
$$

Next, we reduce the theorem to a commutator estimate by using the $2\times2$-trick again. We use the commutator notation $[x,b]=xb-bx$.
Put 
$$\tilde x=\begin{pmatrix}
 x&0\\
 0&y
 \end{pmatrix}\quad\textrm{and}\quad \tilde b=\begin{pmatrix}
 0&1\\
 0&0
 \end{pmatrix}\,.$$
So that
$$ \big\| [M_{p,q} (\tilde x),\tilde b] \big\|_{q}= \big\| M_{p,q}(x)-M_{p,q}(y)\big\|_q \qquad\textrm{and}\qquad \big\| [\tilde x,\tilde b] \big\|_{p}=\big\| x-y\big\|_p.$$

\begin{lemma}\label{com} 
If $p\geq 1$, $0 < \theta \le 1$ and $x\in L_{ p}^+(\M)$ and $b\in \M$ then
$$\Big\| \big[x^\theta,b\big]\Big\|_{\frac p \theta} \leq 2^{\theta}
\big\|b\big\|_\infty ^{1-\theta}\big\| [x, b] \big\|_{p}^\theta .$$
$$ \big\| [x, b] \big\|_{p} \leq \frac {12} \theta 
\big\|x\big\|_p ^{1-\theta}\Big\| \big[x^\theta,b\big]\Big\|_{\frac p \theta}.$$
\end{lemma}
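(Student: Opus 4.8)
The plan is to realise each commutator $[f(x),b]$ as the Fr\'echet derivative of $f$ at $x$ evaluated on $[x,b]$, feed in the integral representations of $s\mapsto s^\theta$ (for the first bound) and of $s\mapsto s^{1+\sigma}$ (for the second), and then reduce everything to the positive‑case estimates already established. After the usual reductions (cutting $x$ by its spectral projections we may assume $\tau(1)<\infty$ and $x$ bounded and invertible, so that functions of $x$ are differentiable; the case $\theta=1$ being trivial, assume $0<\theta<1$), the identity $P_i[x,b]P_j=(\lambda_i-\lambda_j)P_ibP_j$ in spectral coordinates gives $[f(x),b]=\sum_{i,j}\tfrac{f(\lambda_i)-f(\lambda_j)}{\lambda_i-\lambda_j}P_i[x,b]P_j=Df(x)([x,b])$ for any $f$ on the spectrum of $x$ and any $b\in\M$. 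Integrating the identity $[f_t(x),b]=t(x+t)^{-1}[x,b](x+t)^{-1}$ for $f_t(s)=s(s+t)^{-1}$ against $c_\theta t^\theta\frac{dt}{t}$ (using $s^\theta=c_\theta\int_{\bR_+}\frac{t^\theta s}{s+t}\frac{dt}{t}$) yields the absolutely convergent
\begin{equation*}
[x^\theta,b]=c_\theta\int_{\bR_+}t^\theta\,(x+t)^{-1}[x,b](x+t)^{-1}\,dt,\tag{$\ast$}
\end{equation*}
while applying $Df_t(x)(h)=h\,g_t(x)+g_t(x)\,h-g_t(x)\,h\,g_t(x)$ for $f_t(s)=s^2(s+t)^{-1}$, $g_t(s)=s(s+t)^{-1}$, together with $c_\sigma\int_{\bR_+}t^\sigma g_t(s)\frac{dt}{t}=s^\sigma$, gives for $0<\sigma\le 1$
\begin{equation*}
[x^{1+\sigma},b]=[x,b]\,x^\sigma+x^\sigma[x,b]-c_\sigma\int_{\bR_+}t^\sigma g_t(x)\,[x,b]\,g_t(x)\,\tfrac{dt}{t}.\tag{$\ast\ast$}
\end{equation*}

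For the second inequality I would set $y=x^\theta\in L^+_{p/\theta}$, $\alpha=1/\theta>1$, $r=p/\theta$; since $\|x\|_p^{1-\theta}=\|y\|_r^{\alpha-1}$ the claim is equivalent to $\|[y^\alpha,b]\|_{r/\alpha}\le\frac{12}{\theta}\|y\|_r^{\alpha-1}\|[y,b]\|_r$, which I would prove exactly in the spirit of Lemma~\ref{hardps2} and Corollary~\ref{alpha}. For $\alpha=1+\sigma\in(1,2]$ apply $(\ast\ast)$: the two anticommutator terms are bounded by $2\|[y,b]\|_r\|y\|_r^{\sigma}$ via H\"older, and for the last term write $g_t(y)=v_t\gamma$ with $v_t$ a function of $y$ and $\gamma^2=c_\sigma\int_{\bR_+}t^\sigma g_t(y)^2\frac{dt}{t}=(1-\sigma)\,y^\sigma\le y^\sigma$, so that $a\mapsto c_\sigma\int_{\bR_+}t^\sigma v_t a v_t\frac{dt}{t}$ is unital, completely positive and trace preserving, hence a contraction on every $L_q$; applying it to $\gamma[y,b]\gamma$ and using H\"older with $\|\gamma^2\|_{r/\sigma}\le\|y\|_r^\sigma$ bounds this term by $\|[y,b]\|_r\|y\|_r^\sigma$. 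Altogether $\|[y^{1+\sigma},b]\|_{r/(1+\sigma)}\le 3\|y\|_r^\sigma\|[y,b]\|_r$. For general $\alpha>1$ one writes $\alpha=n(1+\delta)$ with $n=[\alpha]$, $\delta\in[0,1)$, telescopes $[y^\alpha,b]=[(y^{1+\delta})^n,b]=\sum_{k=0}^{n-1}(y^{1+\delta})^k[y^{1+\delta},b](y^{1+\delta})^{n-1-k}$, and combines the previous step with H\"older to get the constant $3n\le 3\alpha=3/\theta$, comfortably within the stated $12/\theta$. No hypothesis on $b$ other than $b\in\M$ enters here.

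The first inequality is the delicate one. For a unitary $b=u$ it is immediate from Lemma~\ref{hardps}: $[x^\theta,u]=(x^\theta-ux^\theta u^*)u$ with $ux^\theta u^*=(uxu^*)^\theta$, so $\|[x^\theta,u]\|_{p/\theta}=\|x^\theta-(uxu^*)^\theta\|_{p/\theta}\le\|x-uxu^*\|_p^\theta=\|[x,u]\|_p^\theta$, even stronger than claimed; and the $2\times2$‑trick with $\tilde b=\begin{sbmatrix}0&b\\ b^*&0\end{sbmatrix}$ (selfadjoint, same operator norm) and $\tilde x=x\oplus x$ reduces the general case to $b=b^*$, since $[\tilde x^\theta,\tilde b]=\begin{sbmatrix}0&[x^\theta,b]\\ [x^\theta,b^*]&0\end{sbmatrix}$ with $[x^\theta,b^*]=-[x^\theta,b]^*$ forces $\|[\tilde x^\theta,\tilde b]\|_{p/\theta}=2^{\theta/p}\|[x^\theta,b]\|_{p/\theta}$ and $\|[\tilde x,\tilde b]\|_p=2^{1/p}\|[x,b]\|_p$. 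However one \emph{cannot} now just write $b=\tfrac12(u+u^*)$ with $u=b+i(1-b^2)^{1/2}$ and invoke the unitary case: this degenerates as $\|b\|\to1$ because $s\mapsto(1-s^2)^{1/2}$ is not Lipschitz at $s=\pm1$, so $\|[x,(1-b^2)^{1/2}]\|_p$ is not controlled by $\|[x,b]\|_p$. Instead I would estimate $(\ast)$ directly, splitting the integral at a threshold $t_0\asymp\|[x,b]\|_p/\|b\|_\infty$: on $\{t<t_0\}$ retain the factor $\|b\|_\infty$, bounding the integrand in $L_{p/\theta}$ after extracting an $x^\theta\in L_{p/\theta}$ and using pointwise estimates such as $\|x^{1-\theta}(x+t)^{-1}\|_\infty\le\theta^\theta(1-\theta)^{1-\theta}\,t^{-\theta}$; on $\{t>t_0\}$ retain the factor $\|[x,b]\|_p$, keeping the commutator sandwiched by $(x+t)^{-1}$ and using $\|(x+t)^{-1}\|_\infty\le t^{-1}$ together with the contraction $x(x+t)^{-1}$; both integrals converge since $\theta<1$, and optimising over $t_0$ produces the H\"older exponent $\theta$ and the constant $2^\theta\|b\|_\infty^{1-\theta}$. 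I expect this "boundary'' part to be the main obstacle: the non‑Lipschitz behaviour of $(1-s^2)^{1/2}$ at $\pm1$ blocks the clean unitary reduction, and one must handle $(\ast)$ by hand while scrupulously avoiding H\"older inequalities with negative exponents --- the tempting rewriting $(\ast)=\gamma\,\Phi([x,b])\,\gamma$ with $\Phi$ unital completely positive and $\gamma=\sqrt{\theta}\,x^{(\theta-1)/2}$ is useless, since $\gamma$ lies in no $L_s$ with $s>0$ --- and this is precisely the mechanism behind the blow‑up of $c_{p,q}$ as $p<q\to1$ noted after the theorem.
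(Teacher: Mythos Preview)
Your treatment of the second inequality is correct and matches the paper's: the paper simply says ``proceed similarly using Lemma~\ref{alpha}'' (i.e., Corollary~\ref{alpha}), and you have written out precisely this adaptation, feeding the commutator $[y,b]$ through the integral representation of Lemma~\ref{hardps2} and then telescoping for integer powers.

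The first inequality is where there is a genuine gap. You correctly identify that writing $b=\tfrac12(u+u^*)$ with $u=b+i\sqrt{1-b^2}$ fails because of the non-Lipschitz behaviour at $\pm1$, but the integral-splitting argument you propose in its place does not work as written. On the region $\{t>t_0\}$ you want to ``retain the factor $\|[x,b]\|_p$'', but the integrand $t^\theta(x+t)^{-1}[x,b](x+t)^{-1}$ lives in $L_p$, not in $L_{p/\theta}$, and multiplication by bounded operators cannot move it there (that would need H\"older with a negative exponent --- exactly the obstruction you flag in your last sentence about~$\gamma$). The small-$t$ region has the mirror problem: once you break the commutator to extract $x^\theta\in L_{p/\theta}$ via $x(x+t)^{-1}=x^{1-\theta}(x+t)^{-1}\cdot x^\theta$, the remaining integral behaves like $\int_0^{t_0}t^{-1}\,dt$ and diverges. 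No choice of $t_0$ repairs both ends, and this is not a matter of being more careful with constants.

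The paper's device is the \emph{Cayley transform}. After reducing to $b=b^*$ with $\|b\|_\infty\le1$, set $u=(b-i)(b+i)^{-1}$; then $u$ is unitary and, since the spectrum of $u$ lies on the arc of $\bT$ through $-1$ from $i$ to $-i$, one has $\|(1-u)^{-1}\|_\infty\le 1/\sqrt2$. From $b=2i(1-u)^{-1}-i$ one gets
\[
\big\|[x^\theta,b]\big\|_{p/\theta}\le 2\|(1-u)^{-1}\|_\infty^2\,\big\|[x^\theta,u]\big\|_{p/\theta}\le\big\|[x^\theta,u]\big\|_{p/\theta},
\]
then your own unitary case (Lemma~\ref{hardps}) gives $\|[x^\theta,u]\|_{p/\theta}\le\|[x,u]\|_p^\theta$, and finally $(b+i)[x,u](b+i)=2i[x,b]$ with $\|(b+i)^{-1}\|_\infty\le1$ closes the chain with the exact constant $2^\theta$. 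The point is that the Cayley transform, unlike $b\mapsto b+i\sqrt{1-b^2}$, is a M\"obius bijection between $[-1,1]$ and an arc of $\bT$ bounded away from~$1$, so there is no boundary degeneration to worry about.
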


\begin{proof}
We start by the first inequality. We may assume $\|b\|_\infty=1$ by homogeneity. Using the $2\times 2$-trick with 
$$\tilde x=\begin{pmatrix}
 x& 0\\
 0& x
 \end{pmatrix}\quad\textrm{and}\quad \tilde b=\begin{pmatrix}
 0&b\\
 b^*&0
 \end{pmatrix}\,,$$
we may assume $b=b^*$ (without loosing on the constant).

Next, as $b=b^*$, we may use the Cayley transform defined by
$$u=(b-i)(b+i)^{-1} ,\qquad b=2i (1-u)^{-1}-i.$$
Clearly $u$ is unitary and functional calculus gives that $\|(1-u)^{-1}\|_\infty\leq \frac 1 {\sqrt 2}$. We have, using Lemma \ref{hardps}
\begin{eqnarray*}
\big\| [x^\theta, b] \big\|_{\frac p \theta} &\leq& 2 
\big\| x^\theta (1-u)^{-1} -(1-u)^{-1} x^\theta \big\|_{\frac p \theta}\\
& \leq & 2 \big\|(1-u)^{-1}\big\|_\infty^2 \big\| x^\theta (1-u) -(1-u) x^\theta \big\|_{\frac p \theta}\\ & \leq& \big\| u^*x^\theta u - x^\theta \big\|_{\frac p \theta}\\
& \leq &  \big\| x u - ux\big\|_{p}^\theta\\
&\leq& \big\|(b+i)^{-1}\big\|_\infty^{2\theta}  \big\| (b+i)x(b-i)  - (b-i)x(b+i)\big\|_{p}^\theta\\
&\leq &2^{\theta}\, \big\| x b -b x \big\|_{p}^\theta.
\end{eqnarray*} 
For the second one, we proceed similarly using Lemma \ref{alpha}.
\end{proof}

\begin{lemma}\label{sum} 
If $p\geq 1$, $0 < \theta \le 1$, there are  constant $C$ and $C_t$ ($t>1$) so that for any  $x,\,y\in L_{ p}^+(\M)$ and $b\in \M$ then
$$\Big\| x^\theta b +b y^\theta \Big\|_{\frac p \theta} \leq C_{\frac p \theta}
\big\|b\big\|_\infty ^{1-\theta}\big\| x b +b y \big\|_{p}^\theta .$$
$$\big\| x b +b y \big\|_{p} \leq C
\big\|x\big\|_p ^{1-\theta}\Big\| x^\theta b +b y^\theta \Big\|_{\frac p \theta} .$$
\end{lemma}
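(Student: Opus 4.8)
By homogeneity in $b$ we may assume $\|b\|_\infty=1$, and by the $2\times2$-tricks used above (first to make $b$ selfadjoint, then to replace the pair $x,y$ by a single positive block) together with the usual spectral cut-offs we may assume that $\tau$ is finite and that $x=y=a$ is bounded and invertible, with $b=b^*$. The case $\theta=1$ being trivial, fix $0<\theta<1$; the two claims then read
\[
\big\|a^\theta b+ba^\theta\big\|_{p/\theta}\le C_{p/\theta}\,\big\|ab+ba\big\|_p^{\theta}
\qquad\text{and}\qquad
\big\|ab+ba\big\|_p\le C\,\big\|a\big\|_p^{1-\theta}\,\big\|a^\theta b+ba^\theta\big\|_{p/\theta}.
\]

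For the first inequality I would start from $a^\theta=c_\theta\int_0^\infty t^{\theta-1}\tfrac a{a+t}\,dt$ and the elementary identity
\[
\tfrac a{a+t}\,b+b\,\tfrac a{a+t}=2\,\tfrac a{a+t}\,b\,\tfrac a{a+t}+t\,(a+t)^{-1}\big(ab+ba\big)(a+t)^{-1},
\]
which writes $a^\theta b+ba^\theta$ as the sum of a ``remainder'' $R=2c_\theta\int_0^\infty t^{\theta-1}\tfrac a{a+t}\,b\,\tfrac a{a+t}\,dt$, depending only on $b$ and the positive contractions $\tfrac a{a+t}$, and a second piece which, putting $\gamma^2=c_\theta\int_0^\infty t^\theta(a+t)^{-2}dt=\theta a^{\theta-1}$ and $(a+t)^{-1}=v_t\gamma$, equals $\Psi\big(\gamma(ab+ba)\gamma\big)$ for the unital trace preserving completely positive map $\Psi:z\mapsto c_\theta\int_0^\infty t^\theta v_tzv_t\,dt$ (compare the proof of Lemma~\ref{hardps2}, where the same device is used). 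Rather than estimating these two pieces separately, I would introduce a further truncation of the $t$-integral and balance the contribution of $R$ (controlled through $\|b\|_\infty=1$ and H\"older's inequality) against that of the $\{a,b\}$-piece (controlled through the contractivity of $\Psi$ on $L_r$ and H\"older's inequality), optimizing over the truncation level. The hard point, and the one genuinely using $b\in\M$, is the $L_{p/\theta}$- rather than merely $L_p$-control of these pieces: this is what forces $C_{p/\theta}$ to blow up as $p/\theta\to1$ — a purely noncommutative effect reflecting, as in the introduction, that the modulus is not Lipschitz on $L_1$ (in the commutative case one gets the clean constant $C_{p/\theta}\le2$).

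For the second inequality one argues symmetrically, or substitutes $a=c^{1/\theta}$ (so $c=a^\theta\ge0$ and $\alpha:=1/\theta\ge1$) to reduce it to $\|c^\alpha b+bc^\alpha\|_p\le C\|c\|_{\alpha p}^{\alpha-1}\|cb+bc\|_{\alpha p}$, and deduces this from Corollary~\ref{alpha}: applying that corollary to the positive pair $c\pm\eta(cb+bc)$ and letting $\eta\to0$ controls the Fr\'echet derivative of $s\mapsto s^\alpha$ at $c$ in the direction $cb+bc$, while the discrepancy between that derivative and $c^\alpha b+bc^\alpha$ — a sum of terms $c^jbc^k$ and of telescoped commutators — is absorbed by H\"older's inequality together with the commutator estimate of Lemma~\ref{com}. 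Here no balancing is needed, so $C$ may be taken absolute.
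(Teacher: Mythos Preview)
Your reductions are fine, but both inequalities have genuine gaps.

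For the first inequality, the decomposition $a^\theta b+ba^\theta=R+S$ is correct, yet neither piece is controlled by $\|ab+ba\|_p^\theta$ alone: the remainder $R$ is bounded only by $2(1-\theta)\|a\|_p^\theta$ (via the ucp average you describe), while in $S$ the weight $\gamma^2=\theta a^{\theta-1}$ carries a \emph{negative} power of $a$, so no H\"older pairing with $z=ab+ba\in L_p$ can land you in $L_{p/\theta}$ (you would need $\gamma\in L_r$ with $\tfrac2r=\tfrac{\theta-1}p<0$). ``Truncation and balancing'' does not repair this: whatever cut-off you choose, one of the two pieces still sees $\|a\|_p$ rather than $\|ab+ba\|_p$. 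What is actually required is the boundedness of the Schur multiplier $(\lambda^\theta+\mu^\theta)/(\lambda+\mu)$ from $L_p$ to $L_{p/\theta}$, and the paper obtains this from the interpolation theorem of \cite{RX} (with an alternate argument via the Brown--Kosaki Jensen inequality when $p=1$ and $\theta\le\tfrac12$).

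For the second inequality, differentiating Corollary~\ref{alpha} does yield $\|D_c(s\mapsto s^\alpha)(cb+bc)\|_p\le 3\alpha\,\|c\|_{\alpha p}^{\alpha-1}\|cb+bc\|_{\alpha p}$, but the discrepancy from $c^\alpha b+bc^\alpha$ is (for integer $\alpha$) exactly $2\sum_{j=1}^{\alpha-1}c^jbc^{\alpha-j}$, and these cross terms are \emph{not} commutators: Lemma~\ref{com} trades $[c^\theta,b]$ for $[c,b]$, whereas here only the anticommutator $cb+bc$ is at hand, and nothing in your hypotheses bounds $[c,b]$. Controlling $c^jbc^k+c^kbc^j$ by $cb+bc$ is precisely a Schur-multiplier statement, and the paper proves the second inequality that way: from the identity
\[
xb+bx=x^{1-\theta}(x^\theta b+bx^\theta)+(x^\theta b+bx^\theta)x^{1-\theta}-(x^{1-\theta}bx^\theta+x^\theta bx^{1-\theta})
\]
together with the complete positivity of the multiplier $(\lambda^\beta\mu^{1-\beta}+\lambda^{1-\beta}\mu^\beta)/(\lambda+\mu)$ established in \cite{RX}.
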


\begin{proof}
Using the  $2\times 2$-trick, we may assume $x=y$. Moreover we may assume that
 $\M$ is finite and $x$ is in $\M$ and invertible. 
Indeed, let $e_n=1_{(\frac 1n,n)}(x)$ and $e_n^\bot=1-e_n$:
$$\big\| x b +b x \big\|_{p} \sim \big\| x e_nbe_n +e_nbe_n x \big\|_{p} +
\big\| e_n x b e_n^\bot \big\|_{p}+ \big\|  e_n^\bot bxe_n \big\|_{p}+\big\| e_n^\bot (xb+bx) e_n^\bot \big\|_{p}$$
$$\big\| x^\theta b +b x^\theta \big\|_{\frac p\theta} \sim \big\|
x^\theta e_nbe_n +e_nbe_n x^\theta \big\|_{\frac p\theta} + \big\| e_nx^\theta
be_n^\bot \big\|_{\frac p\theta}+ \big\| e_n^\bot bx^\theta e_n \big\|_{\frac
  p\theta}+ \big\| e_n^\bot (x^\theta
b+bx^\theta)  e_n^\bot \big\|_{\frac p\theta}.$$ 
If we apply the result in $e_n\M e_n$ where $xe_n\in e_n\M e_n$ is invertible, we get control for the first terms. For the 2 middle terms this is clear by  interpolation as $\big\|
e_nx^\theta be_n^\bot \big\|_{\frac p\theta}\leq \big\| e_nx be_n^\bot
\big\|_{p}^\theta \|b\|_\infty^{1-\theta}$ and $\big\| e_nx be_n^\bot
\big\|_{p}\leq \big\|
e_nx^\theta be_n^\bot \big\|_{\frac p\theta} \|e_nx\|_p^{1-\theta}$. And finally, the last two terms go to 0 with $n\to \infty$.

We will use techniques from \cite{RX} based on Schur multipliers
estimates and interpolation. We use $M_{cb}$ for the completely
bounded norm of a Schur multiplier on $\bB(\ell_2)$. By an obvious
approximation, we may also assume that $x$ has a finite spectrum.  Let
$(\lambda_i)_{i=1...n}$ be the spectrum of $x$ with associated
projections $(p_i)_{i=1...n}$.
We start by the second inequality. For any $\alpha\in[0,1]$, the matrix $\Big( \frac
{\lambda_i^\alpha\lambda_j^{1-\alpha}+\lambda_i^{1-\alpha}
  \lambda_j^\alpha}{\lambda_i+\lambda_j}\Big)_{i,j}$ defines a unital completely positive Schur multiplier
on $\bB(\ell_2^n)$, see the computation in Corollary 2.5 in \cite{RX}. As above, this implies that 
$$\Big\| x^{1-\alpha} b x^\alpha + x^\alpha b
x^{1-\alpha}\Big\|_p \leq  \Big\| xb+bx\Big\|_p.$$
We use$$ x b+bx = x^{1-\theta} (x^\theta b+bx^\theta) + (x^\theta b+bx^\theta)x^{1-\theta} - 
(x^{1-\theta} bx^\theta + x^\theta bx^{1-\theta}).$$
Assume $\theta\geq \frac 13$, by the H\"older inequality
$$\Big\|x b+bx\Big\|_p \leq\big\|x\big\|_p^{1-\theta} \Big( 2\Big\| x^\theta b+bx^\theta\Big\|_{\frac p \theta} + \Big\| x^{\frac {1-\theta}2} b x^{\frac {3\theta-1}2} +x^{\frac {3\theta-1}2}bx^{\frac {1-\theta}2}\Big\|_{\frac p \theta} \Big) $$
Using the above argument with $\alpha=\frac {1-\theta}2$:
$$\Big\|x b+bx\Big\|_p\leq C \big\|x\big\|_p^{1-\theta} \Big\| x^\theta b+bx^\theta\Big\|_{\frac p \theta}.$$
When $\theta <\frac 1 3$, we use 
$$\Big\| x^{1-\theta} bx^\theta + x^\theta bx^{1-\theta}\Big\|_p \leq 2\big\|x\big\|_p^{1-\theta}  \Big\|x^{\frac \theta 2} bx^{\frac \theta 2}\Big\|_{\frac p {\theta}}.$$
And one corrects with a Schur multiplier of the form $\Big( \frac
{\sqrt{\mu_i\mu_j}}  {\mu_i+\mu_j}\Big)_{i,j}$ which has norm 1 (see \cite{RX}) to get 
$$\Big\| x^{1-\theta} bx^\theta + x^\theta bx^{1-\theta}\Big\|_p \leq 2\big\|x\big\|_p^{1-\theta} \Big\| x^\theta b+bx^\theta\Big\|_{\frac p \theta}.$$

For the first inequality, the result is then a particular case
of the main theorem of \cite{RX}. The latter says 
the Banach spaces defined by norms 
$\|b\|_{L_q(x^\alpha)}=\| x^\alpha b+ b x^\alpha\|_q$ 
interpolate, so that $L_{\frac p\theta}(x^\theta)=(L_\infty(x^0), L_p(x))_\theta$. As a corollary,
$$\Big\| x^\theta b +b x^\theta \Big\|_{\frac p \theta} \leq C_{\frac p\theta}
\big\|b\big\|_\infty ^{1-\theta}\big\| x b +b x \big\|_{p}^\theta.$$

To avoid the use of \cite{RX} we provide an alternate proof of the
latter inequality with a better constant only when $p=1$ and $\theta\leq \frac 12$. Assuming $\|b\|_\infty\leq 1$, we use  the Jensen's inequality from \cite{BrownKos} for the convex function $x\mapsto x^{\frac 1{2\theta}}$ (for us it follows easily from the operator convexity of $x^\alpha$ for $\alpha\in [1,2]$ and an iteration argument): 
\begin{eqnarray*}
\Big\| x^\theta b +b x^\theta \Big\|_{\frac 1 \theta}^{\frac 1 \theta } &\leq & 
2^{\frac 1 \theta} \Big(\big\| x^\theta b\big\|_{\frac 1 \theta}^{\frac 1\theta} +
\big\| bx^\theta \big\|_{\frac 1 \theta}^{\frac 1\theta}\Big)\\
&\leq& 2^{\frac 1 \theta}\tau \Big( \big(b^*x^{2\theta}b\big)^{\frac 1{2\theta}}+\big(bx^{2\theta}b^*\big)^{\frac 1{2\theta}} \Big)\\
& \leq &2^{\frac 1 \theta}\tau \Big( b^*xb + bxb^*\Big)\\
&\leq & 2^{\frac 1 \theta}\big\|xb+bx \big\|_1.
\end{eqnarray*}
\end{proof}

\begin{lemma}\label{commaz} 
There is an absolute constant $C>0$ and constants $C_t$ ($t>1$) so that :
\begin{itemize}
\item If $q>p\geq 1$, and $x\in L_{ p}(\M)$, $x=x^*$ 
and $b\in \M$ then
\begin{equation}\label{maa}\Big\| \big[M_{p,q} (x),b\Big] \Big\|_{q} \leq C_q
\big\|b\big\|_\infty ^{1-\frac pq}\big\| [x, b]\big\|_{p}^{\frac pq} .\end{equation}
\item If $p>q\geq 1$, and $x\in L_{ p}(\M)$, $x=x^*$ 
and $b\in \M$ then
\begin{equation}\label{maa2}\Big\| \big[M_{p,q} (x),b\Big] \Big\|_{q} \leq C\frac p q
\big\|x\big\|_p ^{\frac pq-1}\big\| [x, b]\big\|_{p} .\end{equation}
\end{itemize}
\end{lemma}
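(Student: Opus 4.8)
The plan is to reduce everything, via the positive/negative part decomposition of $x$, to the positive-element estimates of Lemmas \ref{com} and \ref{sum}. As in the proof of Lemma \ref{hardps2}, by cutting $x$ with spectral projections and approximating I may assume that $\tau$ is finite and $x$ is bounded and invertible, the general case following by letting the cut-offs increase to $1$ and using that $M_{p,q}$ is locally uniformly continuous from $L_p(\M)$ to $L_q(\M)$. Write $e=1_{(0,\infty)}(x)$, $f=1-e$, $x_+=xe\in L_p^+(\M)$ and $x_-=-xf\in L_p^+(\M)$, so that $|x|=x_++x_-$, $\|x_\pm\|_p\le\|x\|_p$, and
$$M_{p,q}(x)=x|x|^{\frac pq-1}=x_+^{\frac pq}-x_-^{\frac pq}.$$
Both $x$ and $M_{p,q}(x)$ are diagonal for the decomposition $1=e+f$.

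I would then pass to the $2\times2$ block picture of $\M$ afforded by $1=e+f$. With $b_{11}=ebe$, $b_{12}=ebf$, $b_{21}=fbe$, $b_{22}=fbf$, the fact that $e$ commutes with $x$, hence with $M_{p,q}(x)$, gives after a direct computation
$$[x,b]=\begin{pmatrix}[x_+,b_{11}]&x_+b_{12}+b_{12}x_-\\-(x_-b_{21}+b_{21}x_+)&-[x_-,b_{22}]\end{pmatrix},$$
and the identical formula with every $x_\pm$ replaced by $x_\pm^{p/q}$ computes $[M_{p,q}(x),b]$. Since compressing by $e$ or $f$ does not increase $L_r$-norms and $\|b_{ij}\|_\infty\le\|b\|_\infty$, applying the triangle inequality to these four blocks reduces each of \eqref{maa}, \eqref{maa2} to bounding the four corners of $[M_{p,q}(x),b]$ by the corresponding corners of $[x,b]$, each of the latter being an element of $L_p(\M)$ of norm at most $\|[x,b]\|_p$.

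For the two diagonal corners I would invoke Lemma \ref{com}, and for the two off-diagonal ones Lemma \ref{sum}, with $(x_+,x_-)$ or $(x_-,x_+)$ in the role of $(x,y)$ and $b_{12}$ or $b_{21}$ in that of $b$. When $q>p$, put $\theta=\frac pq<1$: the first inequalities of Lemmas \ref{com} and \ref{sum}, applied in $L_{p/\theta}(\M)=L_q(\M)$ to $x_\pm\in L_p^+(\M)$, bound the diagonal corners by $2^\theta\|b\|_\infty^{1-\theta}\|[x,b]\|_p^\theta$ and the off-diagonal corners by $C_q\|b\|_\infty^{1-\theta}\|[x,b]\|_p^\theta$, and summing the four gives \eqref{maa}. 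When $p>q$, put $\alpha=\frac pq>1$ and $\theta=\frac qp=\frac1\alpha$ and apply instead the \emph{second} inequalities of Lemmas \ref{com} and \ref{sum}, with ``$p$'' $=q$ and ``$\theta$'' $=\frac qp$, to the positive elements $x_\pm^\alpha\in L_q^+(\M)$, whose $\theta$-th powers are $x_\pm$; since $\|x_\pm^\alpha\|_q=\|x_\pm\|_p^{p/q}\le\|x\|_p^{p/q}$, this bounds each diagonal corner by $\frac{12p}{q}\|x\|_p^{\frac pq-1}\|[x,b]\|_p$ and each off-diagonal corner by $C\|x\|_p^{\frac pq-1}\|[x,b]\|_p$, and summing gives \eqref{maa2} with a constant at most $C'\frac pq$.

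The step carrying the real weight is the block identity: a priori $[x,b]$ controls neither $[x_+,b]$ nor $[x_-,b]$, since $e$ does not commute with $b$, and it is precisely the diagonality of $x$ and of $M_{p,q}(x)$ that forces the compressions of the two commutators to collapse — to pure commutators $[x_\pm,b_{ii}]$, resp.\ $[x_\pm^{p/q},b_{ii}]$, on the diagonal, and to pure anticommutator-type sums $x_+b_{12}+b_{12}x_-$, resp.\ $x_+^{p/q}b_{12}+b_{12}x_-^{p/q}$, off the diagonal — with no surviving interaction terms, after which Lemmas \ref{com} and \ref{sum} do the rest. The remaining bookkeeping is routine: the two off-diagonal corners feed Lemma \ref{sum} the positive operators in opposite orders, which costs nothing as $\|x_+\|_p,\|x_-\|_p\le\|x\|_p$; the $q$-dependence of the constant in \eqref{maa} comes only from that of $C_q$ in Lemma \ref{sum}; and the factor $\frac pq$ in \eqref{maa2} comes from the $\frac{12}\theta$ in Lemma \ref{com} evaluated at $\theta=\frac qp$.
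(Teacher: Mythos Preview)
Your proof is correct and follows essentially the same approach as the paper: decompose $x$ into its positive and negative parts via the spectral projections $e_\pm$, observe that both $x$ and $M_{p,q}(x)$ are block-diagonal so that $[M_{p,q}(x),b]$ splits into two commutator blocks and two anticommutator-type blocks, and then apply Lemma~\ref{com} to the diagonal blocks and Lemma~\ref{sum} to the off-diagonal ones. The only difference is that the paper states this in two lines while you spell out the bookkeeping; your initial reduction to bounded invertible $x$ is harmless but unnecessary, since Lemmas~\ref{com} and~\ref{sum} already apply to arbitrary positive elements of $L_p$.
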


\begin{proof}
For \eqref{maa}, write $e_+=1_{[0,\infty)}(x)$ and $e_-=1_{(-\infty,0)}(x)$ and put 
$b_{\pm,\pm}=e_{\pm} be_{\pm}$. So that 
$$\big[M_{p,q} (x),b\big]= \big[x_+^{\frac  pq},b_{+,+}\big] - \,\big[x_-^{\frac  pq},b_{-,-}\big]+ \,
\big( x_+^{\frac  pq}b_{+,-}+b_{+,-}x_-^{\frac  pq}\big) - \big( x_-^{\frac  pq}b_{-,+}+b_{-,+}x_+^{\frac  pq}\big).$$
We can apply either Lemma \ref{com} or \ref{sum} to each term. In any case, the upper bound we get is smaller than the right side of \eqref{maa}.

A similar argument works for \eqref{maa2}.
\end{proof}

\begin{rk}\label{gen}{\rm
The techniques developed here work if one replaces
$M_{p,q}$ by any function $f:\bR \to \bR$. With such a general function $f$,
$\ref{commaz}$ boils down to the boundedness of some Schur multipliers on 
$S_p[L_p(\M)]$ (by the discretization from \cite{RX}), this is the argument of
\cite{Dav}. This also explains 
why the results of \cite{Dav, AP, PoSu} remain true for semifinite von Neumann algebras.}
\end{rk}

\section{General case}

 In the general case, we use the Haagerup definition of $L_p$-spaces
 \cite{terp} and the Haagerup reduction technique from \cite{HJX}
 (see \cite{CPPR} for extension from states to weights). As the
 construction is very technical, we only give a sketch to keep the paper short.  Let $\M$ be a
 general von Neumann algebra with a fixed faithful normal semifinite
 weight $\phi$ (we use the classical notation $\mathfrak n_\phi$, 
$\mathfrak m_\phi$,... for constructions associated to $\phi$).  As usual $\sigma^\phi$ denotes the automorphisms group
 of $\phi$.  We let $\hat \M=M \rtimes_{\sigma^\phi} \bR$ be the core of
 $\M$. It is a semifinite von Neumann algebra with a distinguished
 trace $\tau$ such that $\tau\circ\hat \sigma_s=e^{-s}\tau$ where
 $\hat \sigma$ is the dual action of $\bR$ on $\hat \M$. The
 definition is then
$$L_p^\phi(\M)= \Big\{ f\in L_0(\hat \M,\tau)\;|\;
 \hat\sigma_s(x)=e^{-\frac sp} x\Big\}.$$ Then $L_1^\phi(M)$ is order
 isometric to $M_*$ and the evaluation at 1 is denoted by ${\rm
   tr}$. The $L_p^\phi$ norm is given by $\|x\|_p^p= {\rm tr} |x|^p$. We
 also denote by $D_\phi$ the Radon-Nykodym derivative of the dual
 weight $\hat \phi$ with respect to $\tau$.

These $L_p^\phi$ spaces are disjoint and the norm topology coincide with
the measure topology of $L_0(\hat M,\tau)$ (Proposition 26 in
\cite{terp}). The construction does not depend on the choice of $\phi$
up to $*$-topological isomorphisms (see below) so that we may drop the superscript $\phi$ when no confusion can arise.

The Haagerup reduction theorem is (see Theorem 2.1 in \cite{HJX} 
or Theorem 7.1 in \cite{CPPR}):

\begin{thm} For any $(\M,\phi)$ there
is a bigger von Neumann algebra $(\R,\tilde \phi)$ where $\tilde \phi$ 
a nfs weight extending $\phi$, a family $a_n$ in the center of the centralizer of $\tilde \phi$ so that
\begin{enumerate}[i)]
\item There is a conditional expectation $\E : \R\to \M$ such that 
$${\phi} \circ \E = \tilde{\phi} \quad \mbox{and} \quad \E \circ \sigma_s^{\tilde{\phi}} = \sigma_s^{{\phi}} \circ \E \quad \mbox{ for all } \quad s \in \mathbb{R}.$$
\item The centralizer $\R_n$ of $\phi_n(.)=\tilde \phi(e^{-a_n}.)$ is
  semifinite for all $n\geq 1$ (with trace $\phi_n$).
\item There exists conditional expectations $\E_n:\R\to \R_n$ such that 
$$\tilde{\phi} \circ \E_n = \tilde{\phi} \quad \mbox{and} \quad \E_n \circ \sigma_s^{\tilde{\phi}} = \sigma_s^{\tilde{\phi}} \circ \E_n \quad \mbox{ for all } \quad s \in \mathbb{R}.$$
\item $\E_n(x) \to x$ $\sigma$-strongly for $x \in
  \mathfrak{n}_{\tilde{\phi}}$ and $\bigcup_{n \geq 1} \R_n$ is
  $\sigma$-strongly dense in $\R$.
\end{enumerate}
\end{thm}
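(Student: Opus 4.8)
The statement is the Haagerup reduction theorem, proved in \cite{HJX} (and extended to weights in \cite{CPPR}); as the paper quotes it from the literature, I will only sketch the plan of proof. The idea is to realise $\R$ as a crossed product of $\M$ by a \emph{dense} discrete subgroup of $\bR$ acting through the modular group, which makes $\sigma^\phi$ ``approximately inner''. Concretely, take $G=\bigcup_{n\ge 1}2^{-n}\bZ$, a countable dense subgroup of $\bR$, and set $\R=\M\rtimes_{\sigma^\phi}G$, the associated discrete crossed product; let $\tilde\phi$ be the dual weight and $\lambda(g)$ ($g\in G$) the implementing unitaries, so that $\lambda(g)x\lambda(g)^*=\sigma^\phi_g(x)$ for $x\in\M$, while $\sigma^{\tilde\phi}_t|_\M=\sigma^\phi_t$ and $\sigma^{\tilde\phi}_t(\lambda(g))=\lambda(g)$. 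As $G$ is discrete abelian, its Pontryagin dual $\widehat G$ is compact, and averaging the dual $\widehat G$-action over Haar measure produces a normal faithful $\tilde\phi$-preserving conditional expectation $\E\colon\R\to\M$; the identities $\phi\circ\E=\tilde\phi$ and $\E\circ\sigma^{\tilde\phi}_s=\sigma^\phi_s\circ\E$ are then read off from the formulas for $\tilde\phi$ and $\sigma^{\tilde\phi}$. This gives i).

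Next I would make the modular flow periodic after a bounded perturbation. A short computation identifies the centralizer of $\tilde\phi$ as $\R_{\tilde\phi}=\M_\phi\, \overline\otimes\, L(G)$, where $\M_\phi$ is the centralizer of $\phi$ (the point being that $\sigma^\phi$ restricts to the identity on $\M_\phi$), so in particular each $\lambda(g)$ lies in the centre $Z(\R_{\tilde\phi})$. For each $n$ pick a self-adjoint $a_n\in Z(\R_{\tilde\phi})$ with $e^{i2^{-n}a_n}=\lambda(2^{-n})$ (a measurable branch of the logarithm, up to a central unitary) and put $\phi_n=\tilde\phi(e^{-a_n}\,\cdot\,)$. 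Since $a_n$ lies in the centralizer of $\tilde\phi$, the Connes cocycle is $(D\phi_n:D\tilde\phi)_t=e^{-ita_n}$, hence $\sigma^{\phi_n}_t=\ad(e^{-ita_n})\circ\sigma^{\tilde\phi}_t$; combined with $\sigma^{\tilde\phi}_{2^{-n}}=\ad\lambda(2^{-n})$ this forces $\sigma^{\phi_n}_{2^{-n}}=\id$, i.e.\ $\sigma^{\phi_n}$ is periodic of period $2^{-n}$. Since $\sigma^{\phi_n}$ is periodic, $\phi_n$ restricts to a faithful normal semifinite trace on its centralizer $\R_n:=\R_{\phi_n}$, so $\R_n$ is semifinite — this is ii); and $\E_n(x)=2^n\int_0^{2^{-n}}\sigma^{\phi_n}_s(x)\,ds$ is a normal conditional expectation onto $\R_n$. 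Because $a_n$ is fixed by $\sigma^{\tilde\phi}$ and conjugating by a unitary of the centralizer preserves $\tilde\phi$, each $\sigma^{\phi_n}_s$ preserves $\tilde\phi$ and commutes with $\sigma^{\tilde\phi}$, hence so does $\E_n$; this is iii).

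The delicate point is iv). As $n\to\infty$ the averaging window $[0,2^{-n}]$ collapses to $\{0\}$, so one expects $\E_n\to\id$ pointwise; the obstruction is that $a_n$ has to satisfy $e^{i2^{-n}a_n}=\lambda(2^{-n})$ with $\lambda(2^{-n})$ \emph{not} close to $1$, so the perturbation cocycle $s\mapsto e^{-isa_n}$ drifts by an amount of order one over the window and a crude bound on $\|\sigma^{\phi_n}_s(x)-x\|$ is useless. Getting around this is the technical core of \cite{HJX}: one exploits cancellation in the time average, chooses the $a_n$ carefully, and works on a suitable dense $*$-subalgebra, obtaining $\E_n(x)\to x$ $\sigma$-strongly for $x\in\mathfrak{n}_{\tilde\phi}$ and the $\sigma$-strong density of $\bigcup_n\R_n$. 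I expect making this convergence precise and uniform to be the main obstacle; the passage from a state to a general weight, by contrast, adds only routine (if heavy) bookkeeping on $\mathfrak{m}_\phi$ and $\mathfrak{n}_\phi$, as in \cite{CPPR}.

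Granting this, the general case of our main Theorem follows quickly: $\E$ gives an isometric embedding $L_p(\M)\subseteq L_p(\R)$, and since $\tilde\phi\circ\E_n=\tilde\phi$ each $\E_n$ extends to a norm-one projection $L_p(\R)\to L_p(\R_n)$ with $\E_n\to\id$ strongly on $L_p(\R)$; for $x,y$ in the unit ball of $L_p(\M)$ one applies the semifinite estimates of Section~2 to $\E_n(x),\E_n(y)$ inside the semifinite algebra $\R_n$, and lets $n\to\infty$, using the local uniform continuity of $M_{p,q}$ \cite{Ray} to pass the constant to the limit.
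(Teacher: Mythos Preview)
The paper does not prove this theorem at all: it is quoted verbatim from \cite{HJX} (Theorem~2.1) and \cite{CPPR} (Theorem~7.1), so there is no ``paper's own proof'' to compare against. Your sketch is the standard construction from those references --- crossed product by the dense discrete subgroup $G=\bigcup_n 2^{-n}\bZ$, the dual-action average for $\E$, the logarithm $a_n$ of $\lambda(2^{-n})$ to make $\sigma^{\phi_n}$ periodic, and the time average for $\E_n$ --- and is accurate at the level of a sketch; your honest flagging of iv) as the delicate step is appropriate.

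Your final paragraph, deducing the main Theorem from the reduction, also matches the paper's argument after the statement. One point the paper makes explicit that you glide over: to apply the semifinite estimates of Section~2 inside $\R_n$ one must pass from the Haagerup $L_p^{\tilde\phi}(\R_n)$ (where $\E_n(x)$ lives) to the tracial $L_p(\R_n,\phi_n)$ via the isomorphisms $\kappa_p$, and check the compatibility $\kappa_q\circ M_{p,q}=M_{p,q}\circ\kappa_p$ (equation~\eqref{compow} in the paper). This is routine but is exactly what lets the power maps on the two sides line up, so it is worth naming.
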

The modular conditions for the conditional expectations imply that 
we can view $L_p(\M)$ and $L_p(\R_n)$ as subspaces of $L_p(\R)$ and there
are extensions:
$$\E^p: L_p(\R) \rightarrow L_p(\M) \quad \mbox{and} \quad 
\E_n^p: L_p(\R) \rightarrow L_p(\R_n).$$
Moreover from {\it iv)}, for any $x\in L_p(\R)$ ($1\leq p<\infty$) we have (see Lemma 7.3 in \cite{CPPR} for instance):
$$\lim_{n\to \infty}\big\| \E_n^p(x) - x\big\|_p =0.$$

Now we make explicit the independence of $L_p(\R_n)$ relative the choice of
the weight. Considering $\R_n$ with $\phi_n$ or $\tilde \phi_n$
gives two constructions, the corresponding 
spaces of measurable operators $N_{\phi_n}=L_0(
\R_n \rtimes_{\sigma^{\phi_n}} \bR, \hat \phi_n)$ and $N_{\tilde \phi}=L_0( \R_n
\rtimes_{\sigma^{\tilde \phi}} \bR, \tau)$ in which the $L_p$-spaces
live.  By Corollary 38 in \cite{terp}, there is a topological
$*$-homomorphism $\kappa :N_{\tilde \phi}\to N_{\phi_n}$ so that $\kappa(L_p^{\tilde
  \phi}(\R_n))=L_p^{\phi_n}(\R_n)$ and is isometric on $L_p$. 

As $\phi_n$ is a trace, we know that $\R_n \rtimes_{\sigma^{\phi_n}}
\simeq \R_n \otimes L_\infty(\bR)$ and the identification $\iota_p :
L_p(\R_n,\phi_n)\to L_p^{\phi_n}(\R_n)$ is $\iota_p(x)=x\otimes
e^{\frac .p}$. Hence we get isometric isomorphisms
$\kappa_p=\iota_p^{-1}\circ \kappa: L_p(\R_n) \to L_p(\R_n,\phi_n)$
that are compatible with left and right multiplications by elements of
$\R_n$ and powers in the sense that for $1\leq q,p< \infty$ and $x\in
L_p^+(\R_n)$
\begin{equation}\label{compow}
\kappa_p(x)^{\frac pq}= \kappa_q\big(x^{\frac pq}\big).\end{equation} 
One can check that $\kappa_p$ is formally given by
$\kappa_p(D^{\frac 1
  {2p}}_{\tilde \phi}xD^{\frac 1 {2p}}_{\tilde \phi})=e^{-\frac
  {a_n}{2p}} x e^{-\frac {a_n}{2p}}$ for  $x\in \mathfrak
m_{\phi_n}$.

Now we can conclude to the proof of the theorem in the general case.
Take $x$ and $y$ in $L_p(M)$, then 
$$ \big\| x-y\big\|_p =\lim_{n\to \infty} \big\| \E_n(x) -\E_n(y)
\big\|_{L_p(\R_n)}= \lim_{n\to \infty} \big\| \kappa_p(\E_n(x))
-\kappa_p(\E_n(y)) \big\|_{L_p(\R_n,\phi_n)}.$$ By Lemma 3.2 in
\cite{Ray}, the map $M_{p,q}$ is continuous on $N_{\tilde \phi}$, thus
  also $L_p\to L_q$, hence
$$ \big\| M_{p,q}(x)-M_{p,q}(y)\big\|_q =\lim_{n\to \infty} \big\| 
\kappa_q(M_{p,q}(\E_n(x)))
-\kappa_q(M_{p,q}(\E_n(y))) \big\|_{L_q(\R_n,\phi_n)}.$$
But thanks to \eqref{compow}, $\kappa_q(M_{p,q}(\E_n(x)))=M_{p,q}(\kappa_p(\E_n(x))$, so that we can use the estimate for semifinite von Neumann algebras to conclude.

\bigskip

In the same way, all inequalities from section 2 can be extended to arbitrary von Neumann algebras (except Remark \ref{gen} as one can not make sense of 
$f(x)\in L_q$ when $x\in L_p^{sa}$ for general functions other than powers).

\bigskip

\textbf{Acknowledgement.} The author would like to thank Masato
Mimura and Gilles Pisier for asking the question on the best H\"older
exponents for the Mazur maps. The author is supported by
ANR-2011-BS01-008-01.

\bibliographystyle{plain}

\end{document}